\newtheorem{thm}{Theorem}
\newtheorem{cor}[thm]{Corollary}
\newtheorem{lem}[thm]{Lemma}
\newcommand{\C}{{\mathbb C}}
\newcommand{\D}{{\mathbb D}}
\newcommand{\inc}{\int_\C}
\newcommand{\Z}{{\mathbb Z}}
\begin{document}

\title{Maximal Zero Sequences for Fock Spaces}

\author{Kehe Zhu}
\address{Department of Mathematics and Statistics, State University of New York,
Albany, NY 12222, USA.}
\email{kzhu@math.albany.edu}
\keywords{zero sequence, maximal zero sequence, uniquness set, Fock spaces, 
Weierstrass $\sigma$-function.}

\begin{abstract}
A sequence $Z$ in the complex plane $\C$ is called a zero sequence for the Fock space 
$F^p_\alpha$ if there exists a function $f\in F^p_\alpha$, not identically zero, such 
that $Z$ is the zero set of $f$, counting multiplicities. We show that there exist zero 
sequences $Z$ for $F^p_\alpha$ with the following properties: (1) For any $a\in\C$ the 
sequence $Z\cup\{a\}$ is no longer a zero sequence for $F^p_\alpha$; (2) the space 
$I_Z$ consisting of all functions in $F^p_\alpha$ that vanish on $Z$ is one dimensional. 
These $Z$ are naturally called maximal zero sequences for $F^p_\alpha$.
\end{abstract}

\maketitle

\section{Introduction}

Let $\Omega$ be a domain in the complex plane $\C$ and let $X$ be a space of analytic
functions on $\Omega$. A sequence $Z$ in $\Omega$ is called a zero sequence (or zero set)
for $X$ if there exists a function $f\in X$ such that $f$ vanishes exactly on $Z$, 
counting multiplicities.

A classical example is the Hardy space $H^p$ of the unit disk $\D$. In this case,
$Z=\{z_n\}$ is a zero sequence for $H^p$ if and only if
$$\sum_n(1-|z_n|)<\infty,$$
which is called the Blaschke condition. Other examples that have been extensively
studied in complex analysis include Bergman spaces, the Dirichlet space, and the disk algebra.
See \cite{D,G,HKZ}.

In all the examples mentioned above, if $Z$ is a zero sequence for $X$, then 
\begin{enumerate}
\item[(i)] $Z\cup\{a_1,\cdots,a_k\}$ remains a zero sequence for $X$, where 
$a_1,\cdots,a_k$ are arbitrary additional points from the underlying domain.
\item[(ii)] The space $I_Z$ of functions in $X$ that vanish on $Z$ (not necessarily exactly 
on $Z$) is always infinite dimensional.
\end{enumerate}
In fact, it is easy to see that both (i) and (ii) hold whenever the space $X$ is invariant 
under multiplication by polynomials. More generally, properties (i) and (ii) hold whenever 
$X$ satisfies the following condition: for any point $a$ in the underlying domain, there 
exists a function $f_a$ such that $f_a$ vanishes exactly at $a$ and $f_a$ is a pointwise 
multiplier of $X$. The underlying domain does not have to be bounded and polynomials do 
not need to be pointwise multipliers. In particular, Hardy spaces of the upper half-plane 
satisfy this condition.

The purpose of this note is to examine properties (i) and (ii) above in the case of Fock 
spaces. Thus for any $0<\alpha<\infty$ and $0<p<\infty$ we consider the Fock space 
$F^p_\alpha$, consisting of entire functions $f$ such that
$$\|f\|^p_{p,\alpha}=\frac\alpha\pi\inc\left|f(z)e^{-\frac\alpha2|z|^2}\right|^p
\,dA(z)<\infty.$$
When $p=\infty$, we define $F^\infty_\alpha$ to be the space of entire functions $f$ such that
$$\|f\|_{\infty,\alpha}=\sup_{z\in\C}|f(z)|e^{-\frac\alpha2|z|^2}<\infty.$$
See \cite{JPR,T,Z2} for more information about Fock spaces.

We will show that there exist zero sequences for $F^p_\alpha$ such that neither (i) nor (ii) 
holds. In fact, we will produce examples of zero sequences $Z$ for $F^p_\alpha$ such that
\begin{enumerate}
\item[(i)] For any $a\in\C$ the sequence $Z\cup\{a\}$ is no longer a zero sequence 
for $F^p_\alpha$.
\item[(ii)] $\dim(I_Z)=1$.
\end{enumerate}
These are clearly very extreme cases. Such $Z$ will be called maximal zero sequences
for $F^p_\alpha$, because it cannot be expanded to another zero sequence for $F^p_\alpha$.
See \cite{Z2} for other pathological properties of zero sequences for Fock spaces.

Note that $Z'=Z\cup\{a_1,\cdots,a_k\}$ should not be looked at as a purely set-theoretic 
notation. It should be understood in the context of zero sequences for analytic functions
and multilicities of the zeros are part of the notation as well. For example, if
$Z'=\{1,1,2\}\cup\{2,2,4\}$, then a function $f$ vanishes on $Z'$ when $f$ has a zero of
order $2$ at $z=1$, a zero of order $3$ at $z=2$, and a simple zero at $z=4$. If we do 
not say that $f$ vanishes {\it exactly} on $Z'$, then additional zeros are permitted.

\section{Weierstrass $\sigma$-functions}

Our examples will be based on square lattices in the complex plane and the associated
Weierstrass $\sigma$-functions. More specifically, for any $0<\alpha<\infty$ we consider
the square lattice
$$\Lambda_\alpha=\left\{\omega_{mn}=\sqrt{\frac\pi\alpha}\,(m+in):(m,n)\in\Z^2\right\},$$
where $\Z=\{0,\pm1,\pm2,\pm3,\cdots\}$ denotes the set of all integers. The Weierstrass
$\sigma$-function associated to $\Lambda_\alpha$ is defined by
$$\sigma_\alpha(z)=z\prod_{(m,n)\not=(0,0)}\left[\left(1-\frac z{\omega_{mn}}\right)
\exp\left(\frac z{\omega_{mn}}+\frac{z^2}{2\omega_{mn}^2}\right)\right].$$
It is well known that $\sigma_\alpha$ is an entire function and its zeros are exactly
the points in the lattice $\Lambda_\alpha$.

The Weierstrass $\sigma$ functions play an essential role in the study of Fock spaces.
For example, these functions are used in \cite{S,SW} to characterize interpolating and
sampling sequences for Fock spaces. There are two properties of the Weierstrass 
$\sigma$-functions that will be critical to us here. We state them as the following two 
lemmas.

\begin{lem}
For any $\alpha>0$ the function 
$$f(z)=|\sigma_\alpha(z)|e^{-\frac\alpha2|z|^2}$$
is doubly periodic with periods $\sqrt{\pi/\alpha}$ and $\sqrt{\pi/\alpha}\,i$.
\label{1}
\end{lem}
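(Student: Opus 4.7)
My plan is to combine the classical quasi-periodicity of the Weierstrass $\sigma$-function with the four-fold rotational symmetry of the square lattice. Write $h=\sqrt{\pi/\alpha}$, so the asserted periods are $h$ and $ih$, and $\Lambda_\alpha=h\Z+ih\Z$.

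I would first establish the rotational identity $\sigma_\alpha(iz)=i\sigma_\alpha(z)$ directly from the infinite product defining $\sigma_\alpha$. The observation $i\omega_{mn}=\omega_{-n,m}$ shows that $(m,n)\mapsto(-n,m)$ is a bijection of $\Z^2\setminus\{(0,0)\}$ to itself, and a follow-up substitution $(m,n)\mapsto(-m,-n)$ converts the expansion of $\sigma_\alpha(iz)$ into $i$ times that of $\sigma_\alpha(z)$. Since $|iz|=|z|$, this immediately yields $f(iz)=f(z)$.

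For the horizontal period, the fact that $h\in\Lambda_\alpha$ makes $\sigma_\alpha(z+h)$ and $\sigma_\alpha(z)$ two canonical products of genus $2$ sharing the same zero set, so their ratio equals $e^{P(z)}$ for a polynomial $P$ of degree at most $2$. Differentiating twice and using $\Lambda_\alpha$-periodicity of $\wp_\alpha=-(\log\sigma_\alpha)''$ gives $P''\equiv 0$, so $P(z)=Az+B$. Differentiating $\sigma_\alpha(iz)=i\sigma_\alpha(z)$ gives $\zeta_\alpha(iz)=-i\zeta_\alpha(z)$, so the quasi-periods $\eta_1:=\zeta_\alpha(h/2)$ and $\eta_2:=\zeta_\alpha(ih/2)$ satisfy $\eta_2=-i\eta_1$. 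Plugging this into the Legendre relation $ih\,\eta_1-h\,\eta_2=i\pi$ produces the explicit value $\eta_1=\pi/(2h)$, hence $A=2\eta_1=\pi/h=\alpha h$, and the oddness $\sigma_\alpha(-h/2)=-\sigma_\alpha(h/2)$ then forces $\mathrm{Re}\,B=Ah/2=\pi/2$. Since $|z+h|^2-|z|^2=2h\,\mathrm{Re}\,z+h^2$ and $\alpha h^2=\pi$, a direct computation gives
\[
\frac{f(z+h)}{f(z)}=\bigl|e^{Az+B}\bigr|\,e^{-\alpha(|z+h|^2-|z|^2)/2}=e^{\alpha h\,\mathrm{Re}\,z+\pi/2-\alpha h\,\mathrm{Re}\,z-\pi/2}=1,
\]
which is the desired $h$-periodicity. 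The vertical period then follows in one line from the rotational invariance: $f(z+ih)=f(-i(z+ih))=f(-iz+h)=f(-iz)=f(z)$.

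The main obstacle is the explicit identification $\eta_1=\pi/(2h)$; it depends simultaneously on the rotational symmetry of the square lattice (which fails for a generic lattice) and the Legendre relation, and it is precisely this arithmetic coincidence that lets the quasi-periodic exponential factors of $\sigma_\alpha$ cancel the Gaussian weight $e^{-\alpha|z|^2/2}$ and produce genuine (not merely quasi-) periodicity of $f$.
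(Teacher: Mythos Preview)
Your argument is correct. The paper does not actually prove this lemma; it simply cites Whittaker--Watson with ``See \cite{WW}.'' Your proposal, by contrast, supplies a complete self-contained proof: you combine the classical quasi-periodicity $\sigma_\alpha(z+h)=e^{Az+B}\sigma_\alpha(z)$ with the four-fold symmetry $\sigma_\alpha(iz)=i\sigma_\alpha(z)$ of the square (lemniscatic) lattice, use the latter together with the Legendre relation to pin down $A=2\eta_1=\alpha h$, and then check that this value makes the exponential factor cancel the Gaussian weight exactly. Each step (oddness of $\sigma_\alpha$ to determine $\mathrm{Re}\,B$, the rotational argument for the second period) is carried out correctly. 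What your approach buys over a bare citation is that it makes visible \emph{why} the result is special to the square lattice: the identity $\eta_2=-i\eta_1$ and hence $\eta_1=\pi/(2h)$ fail for generic lattices, and without this arithmetic coincidence the function $|\sigma(z)|e^{-\alpha|z|^2/2}$ would only be quasi-periodic.
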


\begin{proof}
See \cite{WW}.
\end{proof}

As a consequence of this result, we see that $\sigma_\alpha\in F^\infty_\alpha$, but
$\sigma_\alpha\not\in F^p_\alpha$ for any $0<p<\infty$. In fact, if we let $R_{mn}$
denote the square centered at $\omega_{mn}$ with horizontal and vertical side-lengths 
$\sqrt{\pi/\alpha}$, then by Lemma~\ref{1} and a change of variables,
\begin{eqnarray*}
\|\sigma_\alpha\|^p_{p,\alpha}&=&\frac\alpha\pi\inc\left|\sigma_\alpha(z)e^{-\frac\alpha2|z|^2}
\right|^p\,dA(z)\\
&=&\frac\alpha\pi\sum_{m,n}\int_{R_{mn}}\left|\sigma_\alpha(z)e^{-\frac\alpha2|z|^2}\right|^p
\,dA(z)\\
&=&\frac\alpha\pi\sum_{m,n}\int_{R_{00}}\left|\sigma_\alpha(z)e^{-\frac\alpha2|z|^2}
\right|^p\,dA(z)\\
&=&\infty.
\end{eqnarray*}

It is well known that every function $f\in F^p_\alpha$ satisfies the following pointwise 
estimate
\begin{equation}
|f(z)|\le\|f\|_{p,\alpha}e^{\frac\alpha2|z|^2},\qquad z\in\C.
\label{eq1}
\end{equation}
See \cite{JPR,T,Z2}. It follows from this and the definition of Fock spaces that 
$F^p_\alpha\subset F^q_\beta$ whenever $0<\alpha<\beta<\infty$, where $0<p\le\infty$ 
and $0<q\le\infty$. Therefore, for $0<\alpha_1<\alpha<\alpha_2<\infty$ and $0<p\le\infty$, 
we always have $\sigma_\alpha\in F^p_{\alpha_2}$ but $\sigma_\alpha\not\in F^p_{\alpha_1}$.

\begin{lem}
If $0<p<\infty$ and $f$ is a function in $F^p_\alpha$ that vanishes on $\Lambda_\alpha$,
then $f$ is identically zero.
\label{2}
\end{lem}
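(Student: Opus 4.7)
The plan is to divide $f$ by $\sigma_\alpha$ and show that the quotient must be a constant which is then forced to vanish.

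First, since $\sigma_\alpha$ has simple zeros exactly at the points of $\Lambda_\alpha$ and $f$ vanishes on $\Lambda_\alpha$, the quotient $g(z) = f(z)/\sigma_\alpha(z)$ extends to an entire function. Our goal reduces to showing that $g$ is a bounded entire function, for then $g$ must be constant by Liouville's theorem, and the constant must be $0$ because $\sigma_\alpha \notin F^p_\alpha$ for $0<p<\infty$ (as computed using Lemma~\ref{1} just before this lemma).

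To prove boundedness of $g$, combine the pointwise estimate \eqref{eq1} with the periodicity in Lemma~\ref{1}. Writing $\varphi(z) = |\sigma_\alpha(z)|e^{-\frac\alpha2|z|^2}$, we have
$$|g(z)|\,\varphi(z) = |f(z)|e^{-\frac\alpha2|z|^2} \le \|f\|_{p,\alpha}\qquad (z\in\C).$$
Consider the grid $\Gamma$ consisting of the boundaries of all squares $R_{mn}$, i.e.\ the horizontal and vertical lines through the midpoints between neighboring lattice points. This grid is disjoint from $\Lambda_\alpha$, so $\varphi$ does not vanish on $\Gamma$. By Lemma~\ref{1}, $\varphi$ is doubly periodic with periods equal to the side lengths of the $R_{mn}$, so its values on $\Gamma$ coincide with its values on the boundary of the single compact square $R_{00}$. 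Since $\varphi$ is continuous and strictly positive on that boundary, there is a constant $c_0 > 0$ with $\varphi \ge c_0$ on all of $\Gamma$. Consequently $|g(z)| \le \|f\|_{p,\alpha}/c_0$ for every $z \in \Gamma$.

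Now apply the maximum modulus principle to the entire function $g$ on each closed square $R_{mn}$: its supremum on $R_{mn}$ is attained on $\partial R_{mn} \subset \Gamma$, and is therefore bounded by $\|f\|_{p,\alpha}/c_0$. Since the squares $R_{mn}$ tile $\C$, this bound holds globally. Hence $g$ is bounded, so $g \equiv c$ for some constant $c$, which gives $f = c\,\sigma_\alpha$. Because $\sigma_\alpha \notin F^p_\alpha$ for $0 < p < \infty$, we conclude $c = 0$ and $f \equiv 0$. No step presents a real obstacle; the key trick is simply to pass from the pointwise bound to boundedness of $g$ by exploiting the fact that the zeros of $\varphi$ on the fundamental domain sit at the centers of the squares, leaving the boundary grid clear.
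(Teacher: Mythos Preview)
Your proof is correct. The paper itself does not prove this lemma; it simply refers the reader to Seip--Wallst\'en \cite{SW}. Your argument is a clean, self-contained alternative that uses only ingredients already established in the paper (Lemma~\ref{1}, the computation that $\sigma_\alpha\notin F^p_\alpha$ for $p<\infty$, and the pointwise estimate~\eqref{eq1}).

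It is worth noting that your argument is essentially the same one the paper deploys later in the proof of Theorem~\ref{4}. There, for $p=\infty$ and $k=1$, the paper shows that any $F\in F^\infty_\alpha$ vanishing on $\Lambda_\alpha$ must equal $c\,\sigma_\alpha$: it bounds $|\sigma_\alpha(z)|e^{-\frac\alpha2|z|^2}$ from below on the boundaries $C_n$ of growing squares staying a fixed distance from $\Lambda_\alpha$, then uses Cauchy estimates. You use the full grid $\Gamma=\bigcup_{m,n}\partial R_{mn}$ together with the maximum principle instead of the expanding contours $C_n$ and Cauchy estimates, but the idea is identical. Combining that $F^\infty_\alpha$ conclusion with $F^p_\alpha\subset F^\infty_\alpha$ and $\sigma_\alpha\notin F^p_\alpha$ gives exactly your proof. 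So although the paper outsources Lemma~\ref{2}, it effectively reproves it in passing; your write-up just makes that implicit argument explicit and places it where it belongs.
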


\begin{proof}
See \cite{SW}.
\end{proof}

Consequently, the square lattice $\Lambda_\alpha$ is a zero sequence for $F^\infty_\alpha$
but not a zero sequence for $F^p_\alpha$ when $0<p<\infty$. Another consequence is the 
following.

\begin{cor}
Suppose $0<\alpha<\beta<\infty$, $0<p\le\infty$, and $0<q\le\infty$. Then every zero sequence
for $F^p_\alpha$ is a zero sequence for $F^q_\beta$, but there exists a zero sequence
for $F^q_\beta$ that is not a zero sequence for $F^p_\alpha$.
\label{3}
\end{cor}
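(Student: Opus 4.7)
The plan is to handle the two assertions separately, leaning on the pointwise estimate \eqref{eq1} and on Lemma~\ref{2}; no new machinery beyond what the excerpt already states is needed.

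For the containment direction, I would start with an arbitrary zero sequence $Z$ for $F^p_\alpha$, witnessed by some nonzero $f\in F^p_\alpha$ whose zero set is exactly $Z$. The inclusion $F^p_\alpha\subset F^q_\beta$ (valid whenever $0<\alpha<\beta<\infty$ and already recorded in the excerpt as a consequence of \eqref{eq1}) then places $f$ into $F^q_\beta$, so that the same $f$ exhibits $Z$ as a zero sequence for $F^q_\beta$. This half of the corollary is essentially a restatement of the inclusion.

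For the strict direction, I would fix any intermediate parameter $\alpha'$ with $\alpha<\alpha'<\beta$ and take $Z=\Lambda_{\alpha'}$. By Lemma~\ref{1} and the remark immediately after it, $\sigma_{\alpha'}\in F^\infty_{\alpha'}$, and the inclusion used above (now applied to the pair $\alpha'<\beta$) gives $\sigma_{\alpha'}\in F^q_\beta$. Since the zeros of $\sigma_{\alpha'}$ are exactly $\Lambda_{\alpha'}$, this sequence is a zero sequence for $F^q_\beta$. To show that $\Lambda_{\alpha'}$ is not a zero sequence for $F^p_\alpha$, I would suppose $f\in F^p_\alpha$ vanishes on $\Lambda_{\alpha'}$ and aim to conclude $f\equiv0$ via Lemma~\ref{2} at parameter $\alpha'$. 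When $p<\infty$, the inclusion $F^p_\alpha\subset F^p_{\alpha'}$ places $f$ directly into $F^p_{\alpha'}$, and Lemma~\ref{2} finishes the job.

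The only point where care is needed, and what I would call the main obstacle, is that Lemma~\ref{2} is stated only for finite $p$, so the case $p=\infty$ must be routed through a finite exponent. There I would use the definition of $F^\infty_\alpha$ to write $|f(z)|\le\|f\|_{\infty,\alpha}e^{\frac{\alpha}{2}|z|^2}$, whence
$$\left|f(z)e^{-\frac{\alpha'}{2}|z|^2}\right|^2\le\|f\|_{\infty,\alpha}^2\,e^{-(\alpha'-\alpha)|z|^2},$$
which is integrable on $\C$ because $\alpha'>\alpha$. Hence $f\in F^2_{\alpha'}$, and Lemma~\ref{2} with $p=2$ again forces $f\equiv0$. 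This uniform treatment covers every range of $p$ and $q$ allowed in the statement.
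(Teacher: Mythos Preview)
Your proof is correct and follows essentially the same route as the paper: use the inclusion $F^p_\alpha\subset F^q_\beta$ for the first assertion, and for the second pick an intermediate parameter $\gamma\in(\alpha,\beta)$ and show that $\Lambda_\gamma$ is a zero sequence for $F^q_\beta$ (via $\sigma_\gamma$) but a uniqueness set for $F^p_\alpha$ (via Lemma~\ref{2}). The only difference is that the paper avoids your case split on $p$ by invoking the inclusion $F^p_\alpha\subset F^2_\gamma$ uniformly for all $0<p\le\infty$, which was already recorded just before the corollary.
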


\begin{proof}
The first assertion follows from the embedding $F^p_\alpha\subset F^q_\beta$. To prove
the second assertion, pick some $\gamma\in(\alpha,\beta)$. By Lemma~\ref{1}, the function 
$\sigma_\gamma$ belongs to $F^q_\beta$, so that $\Lambda_\gamma$ is a zero sequence for
$F^q_\beta$. If $f$ is a function in $F^p_\alpha\subset F^2_\gamma$ that vanishes on 
$\Lambda_\gamma$, then by Lemma~\ref{2}, $f$ must be identically zero, so $\Lambda_\gamma$
is not a zero sequence for $F^p_\alpha$.
\end{proof}

The more interesting problem for us is when $\alpha=\beta$: do $F^p_\alpha$ and 
$F^q_\alpha$ have different zero sequences whenever $p\not=q$? Although we do not 
have a complete answer, it is easy to exhibit particular examples of such pairs that 
do not have the same zero sequences.

The simplest example is $Z=\Lambda_\alpha$, which is a zero sequence for $F^\infty_\alpha$, 
but not a zero sequence for any $F^p_\alpha$ when $0<p<\infty$. Similarly, $Z=\Lambda_\alpha-\{0\}$ is a zero sequence for $F^p_\alpha$ when $p>2$,
because the function $f(z)=\sigma_\alpha(z)/z$ belongs to $F^p_\alpha$ if and only if
$p>2$ (see Lemma~\ref{1} and calculations below). However, this $Z$ is not a zero sequence 
for $F^2_\alpha$. To see this, suppose $f$ is a function in $F^2_\alpha$ such that $f$ 
vanishes on $Z$. By Weierstrass factorization, we have $f(z)=[\sigma_\alpha(z)/z]g(z)$ 
for some entire function $g$. Let $\Omega=\C-R_{00}$ and
$$I(f)=\int_{\Omega}\left|f(z)e^{-\frac\alpha2|z|^2}\right|^2\,dA(z).$$
Then by the double periodicity of the function $\sigma_\alpha(z)e^{-\frac\alpha2|z|^2}$,
\begin{eqnarray*}
I(f)&=&\sum_{(m,n)\not=(0,0)}\int_{R_{mn}}\left|\sigma_\alpha(z)e^{-\frac\alpha2|z|^2}
\right|^2\left|\frac{g(z)}{z}\right|^2\,dA(z)\\
&=&\sum_{(m,n)\not=(0,0)}\int_{R_{00}}\left|\sigma_\alpha(z)e^{-\frac\alpha2|z|^2}\right|^2
\left|\frac{g(z+\omega_{mn})}{z+\omega_{mn}}\right|^2\,dA(z).
\end{eqnarray*}
Let $D$ denote the disk $|z|<\sqrt{\pi/\alpha}/100$ and choose a positive constant $C_1$
such that 
$$|\sigma_\alpha(z)|e^{-\frac\alpha2|z|^2}>C_1,\qquad z\in R_{00}-D.$$
We then have
$$I(f)\ge C_1\sum_{(m,n)\not=(0,0)}\int_{R_{00}-D}\left|
\frac{g(z+\omega_{mn})}{z+\omega_{mn}}\right|^2\,dA(z).$$
When $(m,n)\not=(0,0)$, the function $h(z)=g(z+\omega_{mn})/(z+\omega_{mn})$ is analytic
on $R_{00}$. It follows easily from the subharmonicity of $|h(z)|^2$ that there is 
another positive constant $C_2$, independent of $(m,n)$, such that
$$I(f)\ge C_2\sum_{(m,n)\not=(0,0)}\int_{R_{00}}\left|\frac{g(z+\omega_{mn})}
{z+\omega_{mn}}\right|^2\,dA(z)=C_2\int_{\Omega}\left|\frac{g(z)}{z}\right|^2\,dA(z).$$
It is easy (using polar coordinates, for example) to show that
$$\int_{\Omega}\left|\frac{g(z)}z\right|^2\,dA(z)<\infty$$
if and only if $g$ is identically zero (here the exponent $2$ is critical). 
Therefore, $f\in F^2_\alpha$ implies that $f$ is identically zero. In other words, 
the sequence $Z$ cannot possibly be a zero set for $F^2_\alpha$.

The above argument actually shows that $Z=\Lambda_\alpha-\{0\}$ is a uniqueness set 
for $F^2_\alpha$. Recall that a set $Z$ in $\C$ is called a uniqueness set (or set of
uniqueness) for $F^p_\alpha$ if every function in $F^p_\alpha$ that vanishes on $Z$
must be identically zero. It is also easy to see that the arguments above still work if 
the point $0$ is replaced by any other point in $\Lambda_\alpha$.

On the other hand, if $Z$ is the resulting sequence when two points $a$ and $b$ are 
removed from $\Lambda_\alpha$, then the function 
$$f(z)=\frac{\sigma_\alpha(z)}{(z-a)(z-b)}$$
belongs to $F^2_\alpha$ and has $Z$ as its zero sequence. Therefore, $Z$ is a zero set for
$F^2_\alpha$. Consequently, it is possible to go from a uniqueness set to a zero set by
removing just one point. Equivalently, it is possible to add just a single point to a zero set
of $F^2_\alpha$ so that the resulting sequence becomes a uniqueness set for $F^2_\alpha$.
This shows how unstable the zero sets for Fock spaces are! See \cite{P1,P2} for applications
of this rigidity in quantumn physics.

We also observe that for any positive integer $N$ with $Np>2$, if $Z$ is an $F^q_\alpha$ 
zero set, where $0<p<q\le\infty$, and if $N$ points are removed from $Z$, then the 
remaining sequence becomes an $F^p_\alpha$ zero set. In fact, if $Z$ is the zero sequence 
of a function $f\in F^q_\alpha$, not identically zero, and $Z'=Z-\{z_1,\cdots,z_N\}$, 
then $Z'$ is the zero sequence of the function
$$g(z)=\frac{f(z)}{(z-z_1)\cdots(z-z_N)},$$
which is easily seen to be in $F^p_\alpha$. In fact, if $R>\max(|z_1|,\cdots,|z_N|)$, then it 
follows from the pointwise estimate (\ref{eq1}) that there exists a positive constant $C$ such that
$$\int_{|z|>R}|g(z)e^{-\frac\alpha2|z|^2}|^p\,dA(z)\le C\int_{|z|>R}\frac{dA(z)}{|z-z_1|^p
\cdots|z-z_N|^p}<\infty.$$
Therefore, zero sets for $F^p_\alpha$ and $F^q_\alpha$ may be different, but they are
not that much different! In other words, the difference is only in a finite number of points.

\section{Maximal zero sets for $F^p_\alpha$}

Let $Z$ be a zero sequence for $F^p_\alpha$ and let $I_Z$ denote the set of functions $f$
in $F^p_\alpha$ such that $f$ vanishes on $Z$ (but not necessarily exactly on $Z$). In 
the classical theories of Hardy and Bergman spaces, the space $I_Z$ is always infinite 
dimensional. This is no longer true for Fock spaces.

\begin{thm}
Let $k$ be any positive integer or $\infty$. Then there exists a zero sequence $Z$ for 
$F^p_\alpha$ such that $\dim(I_Z)=k$.
\label{4}
\end{thm}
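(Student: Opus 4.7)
The plan is to realize $Z$ by deleting exactly $N$ points from the square lattice $\Lambda_\alpha$, where $N$ is tailored to $k$. As $N$ ranges over the positive integers with $N>2/p$, the integer $\lceil N-2/p\rceil$ takes every positive value exactly once, so for any finite $k\ge 1$ I can pick $N$ with $\lceil N-2/p\rceil=k$. Fix such $N$ together with distinct points $z_1,\dots,z_N\in\Lambda_\alpha$, and set $Z=\Lambda_\alpha\setminus\{z_1,\dots,z_N\}$. The function $f_0(z)=\sigma_\alpha(z)/\prod_{i=1}^N(z-z_i)$ is entire with zero set exactly $Z$, and the calculation already performed in the introduction (removing $N$ points from a zero set) gives $f_0\in F^p_\alpha$, since $Np>2$. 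Hence $Z$ is a zero sequence for $F^p_\alpha$.

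Now suppose $f\in I_Z$. Then $f(z)\prod_i(z-z_i)$ is entire and vanishes on all of $\Lambda_\alpha$, so it equals $\sigma_\alpha(z)h(z)$ for some entire $h$; equivalently, $f=f_0\,h$. The heart of the proof is identifying the admissible $h$'s as precisely the polynomials of degree strictly less than $N-2/p$. To show $h$ is a polynomial, I would combine the pointwise estimate (\ref{eq1}) with Lemma~\ref{1}: since the periodic function $g(z)=|\sigma_\alpha(z)|e^{-\alpha|z|^2/2}$ vanishes only on $\Lambda_\alpha$, it is bounded below by some $c>0$ off a union of small disjoint disks around the lattice points, and so $|h(z)|\le C\,|\prod_i(z-z_i)|\le C'(1+|z|)^N$ on this complement; the maximum modulus principle extends the same polynomial bound across the disks, forcing $h$ to be a polynomial of degree at most $N$. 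For the sharp degree bound, I would expand $\|f\|_{p,\alpha}^p$ as a sum over the fundamental squares $R_{mn}$ and substitute $z=w+\omega_{mn}$, exactly as in the introduction's computation. Using the periodicity of $g$, the integrand reduces on the $(m,n)$-square to $g(w)^p|h(w+\omega_{mn})|^p/|\prod_i(w+\omega_{mn}-z_i)|^p$, which for large $|\omega_{mn}|$ behaves like $|\omega_{mn}|^{(d-N)p}$ times a bounded function of $w$ when $h$ has degree $d$. The resulting lattice sum $\sum|\omega_{mn}|^{(d-N)p}$ converges if and only if $d<N-2/p$, and the same estimate, run in reverse, shows that every polynomial of this degree indeed yields an element of $I_Z$.

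A count of polynomials of degree at most $\lceil N-2/p\rceil-1$ now gives $\dim I_Z=\lceil N-2/p\rceil=k$, settling the finite case. For $k=\infty$ I would simply take $Z=\{z_0\}$ for any point $z_0\in\C$: the linear polynomial $z-z_0$ lies in $F^p_\alpha$ and vanishes exactly at $z_0$, so $Z$ is a zero sequence, while $I_Z$ contains every polynomial multiple of $z-z_0$ and is plainly infinite-dimensional. The main technical obstacle is the sharp passage from $\deg h\le N$ to $\deg h<N-2/p$: the crude growth estimate yields only the former, and pinning down the precise dimension requires the more delicate lattice-sum analysis of the Fock norm, which is what makes the ceiling $\lceil N-2/p\rceil$ (rather than, say, $N$) the correct answer.
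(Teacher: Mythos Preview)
Your approach is essentially the paper's: delete the appropriate number of points from $\Lambda_\alpha$, factor any $f\in I_Z$ as $f_0 h$ via $\sigma_\alpha$, use the pointwise estimate together with the periodicity of $|\sigma_\alpha(z)|e^{-\alpha|z|^2/2}$ to force $h$ to be a polynomial, and then pin down the admissible degrees. The differences are cosmetic: you parametrize by the total number $N$ of deleted points and read off $k=\lceil N-2/p\rceil$ from a direct lattice-sum estimate on $\|f_0 h\|_{p,\alpha}^p$, whereas the paper fixes the minimal $N_0$ with $N_0 p>2$, removes $N_0+k-1$ points, and then invokes the minimality of $N_0$ (via condition~(\ref{eq3})) as a shortcut to the same sharp degree bound. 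One small gap: your norm-integral and lattice-sum argument tacitly assumes $p<\infty$; for $p=\infty$ the formula $\dim I_Z=\lceil N-2/p\rceil=N$ is off by one (removing $N$ points from $\Lambda_\alpha$ actually gives $\dim I_Z=N+1$, since the boundedness criterion admits $\deg h\le N$ rather than $\deg h<N$), so that case needs the separate, simpler treatment the paper gives it.
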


\begin{proof}
The case $k=\infty$ is trivial; any finite sequence $Z$ will work. So we assume that $k$ is
a positive integer in the rest of the proof.

We first consider the case $p=\infty$ and $k>1$. In this case, we consider
$Z=\Lambda_\alpha-\{a_1,\cdots,a_{k-1}\}$, where $a_1,\cdots,a_{k-1}$ are (any) distinct 
points in $\Lambda_\alpha$, and
$$f(z)=\frac{\sigma_\alpha(z)}{(z-a_1)\cdots(z-a_{k-1})}.$$
It follows from Lemma \ref{1} that $f\in F^\infty_\alpha$ and $Z$ is exactly the zero sequence
of $f$. Furthermore, if $h$ is a polynomial of degree less than or equal to $k-1$, then the
function $f(z)h(z)$ is still in $F^\infty_\alpha$. 

On the other hand, if $F$ is any function in 
$F^\infty_\alpha$ that vanishes on $Z$, then we can write 
$$F(z)=f(z)g(z)=\frac{\sigma_\alpha(z)g(z)}{(z-a_1)\cdots(z-a_{k-1})},$$
where $g$ is an entire function. For any positive integer $n$ let $C_n$ be the boundary 
of the square centered at $0$ with horizontal and vertical side-length
$(2n+1)/\sqrt{\pi/\alpha}$. It is clear that 
$$d(C_n,\Lambda_\alpha)\ge\sqrt{\pi/\alpha}/2,\qquad n\ge1.$$
So there exists a positive constant $C$ such that
$$|\sigma_\alpha(z)|e^{-\frac\alpha2|z|^2}\ge C,\qquad z\in C_n,n\ge1.$$
This together with the assumption that $F\in F^\infty_\alpha$ implies that there exists
another positive constant $C$ such that
\begin{equation}
|g(z)|\le C|z-a_1|\cdots|z-a_{k-1}|
\label{eq2}
\end{equation}
for all $z\in C_n$ and $n\ge1$. By Cauchy's integral estimates, the function $g$ must be
a polynomial of degree at most $k-1$.

Therefore, when $p=\infty$, $k>1$, and $Z=\Lambda_\alpha-\{a_1,\cdots,a_{k-1}\}$,
we have shown that a function $F\in F^\infty_\alpha$ vanishes on $Z$ if and only if
$$F(z)=\frac{\sigma_\alpha(z)h(z)}{(z-a_1)\cdots(z-a_{k-1})},$$
where $h$ is a polynomial of degree less than or equal to $k-1$. This shows that
$\dim(I_Z)=k$. 

When $p=\infty$ and $k=1$, we simply take $Z=\Lambda_\alpha$. The arguments
above can be simplified to show that a function $F\in F^\infty_\alpha$ vanishes 
on $Z$ if and only if $F=c\sigma_\alpha$ for some constant $c$.

Next, we assume that $0<p<\infty$ and $k$ is a positive integer. In this case, we let 
$N$ denote the smallest positive integer such that $Np>2$, or equivalently,
\begin{equation}
\int_{|z|>1}\left|\frac{\sigma_\alpha(z)e^{-\frac\alpha2|z|^2}}{z^N}\right|^p
\,dA(z)<\infty.
\label{eq3}
\end{equation}
Remove any $N+k-1$ points $\{a_1,\cdots,a_{N+k-1}\}$ from $\Lambda_\alpha$ and denote 
the remaining sequence by $Z$. Then $Z$ is the zero sequence of the function
$$\frac{\sigma_\alpha(z)}{(z-a_1)\cdots(z-a_{N+k-1})},$$
which belongs to $F^p_\alpha$ in view of (\ref{eq3}). In fact, if $g$ is any polynomial 
of degree less than or equal to $k-1$, then it follows from (\ref{eq3}) that $g$ times 
the above function belongs to $I_Z$. 

Conversely, if $f$ is any function in $F^p_\alpha$ that vanishes on $Z$, then we can write
$$f(z)=\frac{\sigma_\alpha(z)g(z)}{(z-a_1)\cdots(z-a_{N+k-1})},$$
where $g$ is an entire function. Since $F^p_\alpha\subset F^\infty_\alpha$, it follows
from (\ref{eq2}) and Cauchy's integral estimates that that $g$ is a polynomial with 
degree less than or equal to $N+k-1$. If the degree of $g$ is $j>k-1$, then
$$\frac{g(z)}{(z-a_1)\cdots(z-a_{N+k-1})}\sim\frac1{z^{N+k-1-j}},\qquad z\to\infty.$$
This together with $f\in F^p_\alpha$ shows that (\ref{eq3}) still holds when $N$ is
replaced by $N+k-1-j$, which contradicts our minimality assumption on $N$. Thus 
$j\le k-1$, which shows that $I_Z$ is $k$-dimensional.
\end{proof}

\begin{lem}
Let $Z$ be a zero sequence for $F^p_\alpha$ and $\dim(I_Z)=k<\infty$. Then
$Z\cup\{a_1,\cdots,a_k\}$ is always a uniqueness set for $F^p_\alpha$.
\label{5}
\end{lem}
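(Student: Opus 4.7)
The plan is to argue by contradiction: suppose there is a nonzero $f\in F^p_\alpha$ that vanishes on $Z\cup\{a_1,\ldots,a_k\}$, and use it to exhibit $k+1$ linearly independent elements of $I_Z$, which will contradict $\dim(I_Z)=k$.

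The analytic heart of the proof is an auxiliary division lemma I would establish first: if $\phi\in F^p_\alpha$ and $\phi(a)=0$, then $\phi(z)/(z-a)\in F^p_\alpha$. To see this, split $\C$ at the circle $|z-a|=1$. On $\{|z-a|\ge 1\}$ one has $|\phi(z)/(z-a)|\le|\phi(z)|$, which bounds that part of the Fock integral by $\|\phi\|^p_{p,\alpha}$. On the closed disk $\{|z-a|\le 1\}$ the quotient $\phi/(z-a)$ is entire (because $\phi(a)=0$), hence bounded on that compact set, and therefore contributes only a constant multiple of $\int_{|z-a|<1}e^{-p\alpha|z|^2/2}\,dA(z)<\infty$. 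The vanishing of $\phi$ at $a$ is doing all the work: it kills the would-be pole locally, so no constraint relating $p$ to the number of factors (like the $Np>2$ used earlier in the paper) is needed here.

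Applying this division lemma iteratively $k$ times---at each step the necessary first-order vanishing is inherited from the multiplicities of $f$ at the $a_i$'s---I obtain that $h(z):=f(z)/[(z-a_1)\cdots(z-a_k)]$ is entire and lies in $F^p_\alpha$; a pointwise comparison of orders at each $c\in Z$ gives $h\in I_Z$, and $h\not\equiv 0$ since $f\not\equiv 0$. Now for $j=0,1,\ldots,k$ set $P_j(z):=\prod_{i=1}^{j}(z-a_i)$ and $h_j:=P_j h$. Then $h_j=f/\prod_{i>j}(z-a_i)$ is again produced from $f$ by the same iterated division, hence lies in $F^p_\alpha$; and multiplying $h\in I_Z$ by the polynomial $P_j$ cannot decrease the order of vanishing at any point of $Z$, so $h_j\in I_Z$ as well.

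Linear independence of $h_0,h_1,\ldots,h_k$ is then purely algebraic: any relation $\sum_{j=0}^{k}c_j h_j=0$ can be divided through by $h\not\equiv 0$ to give the polynomial identity $\sum_{j=0}^{k}c_j P_j\equiv 0$, and since $\deg P_j=j$ these polynomials are linearly independent, forcing every $c_j=0$. This exhibits $k+1$ linearly independent elements of $I_Z$, contradicting $\dim(I_Z)=k$. The only substantive step is the division lemma; the rest is bookkeeping with multiplicities and elementary linear algebra, and crucially the construction uses nothing about the specific locations of $a_1,\ldots,a_k$, which is exactly what the word ``always'' in the statement demands.
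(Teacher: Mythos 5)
Your proof is correct and follows essentially the same route as the paper: produce $k+1$ linearly independent functions in $I_Z$ by dividing the extra zeros out of $f$, contradicting $\dim(I_Z)=k$. The only differences are cosmetic --- you use the nested family $f/\prod_{i>j}(z-a_i)$ where the paper uses $f$ and $f/(z-a_i)$, and you spell out the one-point division lemma ($\phi(a)=0$ implies $\phi/(z-a)\in F^p_\alpha$) that the paper leaves implicit.
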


\begin{proof}
Let $Z'=Z\cup\{a_1,\cdots,a_k\}$. If there exists a function $f\in F^p_\alpha$, not
identically zero, such that $f$ vanishes on $Z'$. Then the functions
$$f(z),\quad \frac{f(z)}{z-a_1},\quad \cdots,\quad \frac{f(z)}{z-a_k},$$
all belong to $F^p_\alpha$ and vanish on $Z$ (obvious adjustments should be made when
there are zeros of high multiplicity). They are clearly linearly independent,
so the dimension of $I_Z$ is at least $k+1$. Therefore, the condition $\dim(I_Z)\le k$
implies that $Z\cup\{a_1,\cdots,a_k\}$ is always a uniqueness set for $F^p_\alpha$.
\end{proof}

\begin{lem}
Let $Z$ be a zero sequence for $F^p_\alpha$ and $\dim(I_Z)>k$. Then 
$Z\cup\{a_1,\cdots,a_k\}$ is never a uniqueness set for $F^p_\alpha$.
\label{6}
\end{lem}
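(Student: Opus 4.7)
The plan is to prove the contrapositive statement in constructive form: I will exhibit a nonzero function $F \in F^p_\alpha$ that vanishes on $Z' = Z\cup\{a_1,\ldots,a_k\}$, using a straightforward linear-algebra / dimension count on $I_Z$. Since $\dim(I_Z) > k$, I can choose $k+1$ linearly independent functions $f_0, f_1, \ldots, f_k \in I_Z$. I will look for $F$ in the form $F = c_0 f_0 + c_1 f_1 + \cdots + c_k f_k$ with $(c_0,\ldots,c_k) \neq 0$, and impose the vanishing conditions at $a_1,\ldots,a_k$.

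The central observation is that vanishing on $Z'$ in the multiplicity-sensitive sense (as clarified in the introduction) amounts to exactly $k$ additional linear conditions on top of vanishing on $Z$. More precisely, group the points so that a distinct base point $a$ occurs with multiplicity $m$ in $Z$ and with multiplicity $n$ in $\{a_1,\ldots,a_k\}$; every $f\in I_Z$ already satisfies $f(a)=f'(a)=\cdots=f^{(m-1)}(a)=0$, and the additional conditions required are $f^{(m)}(a)=\cdots=f^{(m+n-1)}(a)=0$, which is $n$ scalar conditions. Summing $n$ over distinct base points gives exactly $k$ linear conditions in total.

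Therefore I get a linear map
\[
T\colon V := \operatorname{span}(f_0,\ldots,f_k) \longrightarrow \C^k,
\]
which sends a combination to the tuple of these $k$ Taylor coefficients. Since $\dim V = k+1 > k = \dim \C^k$, the kernel of $T$ is nontrivial. Any nonzero element of $\ker T$ is a function in $F^p_\alpha$, not identically zero (by linear independence of the $f_i$'s), that vanishes on $Z'$. Hence $Z'$ is not a uniqueness set for $F^p_\alpha$, which is what we wanted.

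The main thing to be careful about is the bookkeeping of multiplicities, especially when some of the $a_j$'s coincide among themselves or with points already in $Z$; the Taylor-coefficient formulation above handles this uniformly. Beyond that, the argument is purely an exercise in counting dimensions and no hard analysis is needed, since all the Fock-space and $\sigma$-function input is already encoded in the hypothesis that $\dim(I_Z) > k$.
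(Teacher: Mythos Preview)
Your proof is correct and follows essentially the same approach as the paper: choose $k+1$ linearly independent functions in $I_Z$, impose the $k$ linear vanishing conditions at $a_1,\ldots,a_k$, and use the dimension count to get a nontrivial solution. Your explicit treatment of multiplicities via Taylor coefficients is exactly the ``obvious adjustment'' the paper alludes to.
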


\begin{proof}
If $\dim(I_Z)>k$, there exist $k+1$ linearly independent functions in $I_k$, say $f_1,\cdots,f_k,f_{k+1}$. Fix any collection $\{a_1,\cdots,a_k\}$ and 
let $Z'=Z\cup\{a_1,\cdots,a_k\}$. Consider the linear combination
\begin{equation}
f=c_1f_1+\cdots+c_{k+1}f_{k+1},
\label{eq4}
\end{equation}
and the system of linear equations
$$c_1f(a_j)+\cdots+c_{k+1}f_{k+1}(a_j)=0,\qquad 1\le j\le k.$$
Again, obvious adjustments should be made when there are zeros of high multiplicity.
This homogeneous system has $k$ equations but $k+1$ variables, so it always has nonzero 
solutions $c_j$, $1\le j\le k$. With such a choice of $c_j$, the function $f$ defined
in (\ref{eq4}) is not identically zero but vanishes on $Z'$. So $Z'$ is not a 
uniqueness set for $F^p_\alpha$.
\end{proof}

\begin{cor}
Suppose $Z$ is a zero sequence for $F^p_\alpha$ and $k$ is a positive integer. Then the
following conditions are equivalent.
\begin{enumerate}
\item[(a)] $\dim(I_Z)\le k$.
\item[(b)] $Z\cup\{a_1,\cdots,a_k\}$ is a uniqueness set for $F^p_\alpha$ for all
$\{a_1,\cdots,a_k\}$.
\item[(c)] $Z\cup\{a_1,\cdots,a_k\}$ is a uniqueness set for $F^p_\alpha$ for some
$\{a_1,\cdots,a_k\}$.
\end{enumerate}
\label{7}
\end{cor}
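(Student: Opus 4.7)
The plan is to close the cycle (a) $\Rightarrow$ (b) $\Rightarrow$ (c) $\Rightarrow$ (a), using Lemmas~\ref{5} and~\ref{6} as the main inputs. The implication (b) $\Rightarrow$ (c) is immediate (just take any $k$-tuple of points), so the substantive work is in the other two implications.

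For (a) $\Rightarrow$ (b), I would set $k' = \dim(I_Z) \le k$ and first handle the case $k' = 0$, in which $Z$ is already a uniqueness set and any enlargement remains one. When $1 \le k' \le k$, Lemma~\ref{5} (applied with the integer $k'$ in place of $k$) shows that $Z \cup \{a_1,\dots,a_{k'}\}$ is a uniqueness set for any choice of $a_1,\dots,a_{k'}$. Since any sequence containing a uniqueness set is itself a uniqueness set, appending the remaining $k - k'$ points $a_{k'+1},\dots,a_k$ preserves this property. Thus $Z \cup \{a_1,\dots,a_k\}$ is a uniqueness set for every choice of the $a_j$.

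For (c) $\Rightarrow$ (a), I would argue by contrapositive: assume $\dim(I_Z) > k$. Then Lemma~\ref{6} applies directly and tells us that $Z \cup \{a_1,\dots,a_k\}$ fails to be a uniqueness set for every choice of $\{a_1,\dots,a_k\}$, which is the negation of (c).

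There is no real obstacle here; the whole statement is a bookkeeping corollary of the two preceding lemmas. The only mild subtlety is making sure the argument for (a) $\Rightarrow$ (b) handles both $k' = 0$ and $k' < k$ cleanly, and noting explicitly that supersets of uniqueness sets remain uniqueness sets so that Lemma~\ref{5}, which is phrased with exactly $\dim(I_Z)$ extra points, can be upgraded to any number of extra points at least $\dim(I_Z)$.
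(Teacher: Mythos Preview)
Your proposal is correct and follows essentially the same approach as the paper: cycle (a) $\Rightarrow$ (b) via Lemma~\ref{5} plus the observation that enlarging a uniqueness set preserves uniqueness, (b) $\Rightarrow$ (c) trivially, and (c) $\Rightarrow$ (a) by the contrapositive of Lemma~\ref{6}. One small remark: since $Z$ is assumed to be a zero sequence, $I_Z$ already contains a nonzero function, so $\dim(I_Z)\ge 1$ and the case $k'=0$ you single out cannot actually occur.
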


\begin{proof}
That (a) implies (b) follows from Lemma~\ref{5} and the fact that expanding a uniqueness
set always results in a uniqueness set. It is trivial that (b) implies (c). Lemma~\ref{6}
shows that (c) implies (a).
\end{proof}

\begin{cor}
Let $Z$ be a zero sequence for $F^p_\alpha$ and $k$ be a positive integer. Then the 
following conditions are equivalent.
\begin{enumerate}
\item[(a)] $\dim(I_Z)=k$.
\item[(b)] For any $\{a_1,\cdots,a_k\}$ the sequence $Z\cup\{a_1,\cdots,a_{k-1}\}$ is not
a uniqueness set for $F^p_\alpha$ but the sequence $Z\cup\{a_1,\cdots,a_k\}$ is.
\item[(c)] For some $\{a_1,\cdots,a_{k-1}\}$ the sequence $Z\cup\{a_1,\cdots,a_{k-1}\}$ 
is not a uniqueness set for $F^p_\alpha$ but $Z\cup\{b_1,\cdots,b_k\}$ is a uniqueness 
set for some $\{b_1,\cdots,b_k\}$.
\end{enumerate}
\label{8}
\end{cor}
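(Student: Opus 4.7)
The plan is to derive Corollary~\ref{8} by two parallel applications of Corollary~\ref{7}, one at level $k$ and one at level $k-1$, together with the observation that $\dim(I_Z)=k$ is precisely the conjunction of $\dim(I_Z)\le k$ and $\dim(I_Z)\ge k$. Since everything sits on top of Corollary~\ref{7}, no new analytic input is needed; the whole argument is bookkeeping about quantifiers.

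First I would rewrite the two defining inequalities in uniqueness language. Corollary~\ref{7} applied at level $k$ says that $\dim(I_Z)\le k$ is equivalent both to ``$Z\cup\{a_1,\ldots,a_k\}$ is a uniqueness set for every choice of $\{a_1,\ldots,a_k\}$'' and to ``$Z\cup\{a_1,\ldots,a_k\}$ is a uniqueness set for some choice.'' Negating (and, when $k\ge 2$, applying Corollary~\ref{7} at level $k-1$), the condition $\dim(I_Z)\ge k$ is equivalent both to ``$Z\cup\{a_1,\ldots,a_{k-1}\}$ fails to be a uniqueness set for every choice'' and to ``it fails to be a uniqueness set for some choice.'' The corner case $k=1$ has to be noted separately: there the relevant assertion is $\dim(I_Z)\ge 1$, which holds automatically by the hypothesis that $Z$ is a zero sequence, and the empty tuple $\{a_1,\ldots,a_{k-1}\}$ makes the statement $Z$ is not a uniqueness set, again a tautology.

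For (a)$\Rightarrow$(b), combine the ``for every choice'' forms of the two reformulations; the only small point is to recognize that the first clause of (b) involves only the first $k-1$ components of the $k$-tuple $\{a_1,\ldots,a_k\}$, so ``for any $\{a_1,\ldots,a_k\}$'' there can be read as ``for any $\{a_1,\ldots,a_{k-1}\}$ and for any $a_k$,'' which matches what the two inequalities give. The implication (b)$\Rightarrow$(c) is immediate: take $\{a_1,\ldots,a_{k-1}\}$ from (b) and let $\{b_1,\ldots,b_k\}$ be any extension. Finally, for (c)$\Rightarrow$(a), apply the existential forms: the failure of $Z\cup\{a_1,\ldots,a_{k-1}\}$ to be a uniqueness set for some choice gives $\dim(I_Z)\ge k$, while the fact that $Z\cup\{b_1,\ldots,b_k\}$ is a uniqueness set for some choice gives $\dim(I_Z)\le k$; together these force $\dim(I_Z)=k$.

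I do not anticipate any real obstacle; the only thing to guard against is a quantifier slip, in particular making sure that (c), which uses ``for some'' on both sides with independently chosen tuples $\{a_j\}$ and $\{b_j\}$, is correctly matched against the existential halves of Corollary~\ref{7}, and that the degenerate case $k=1$ is handled by the automatic bound $\dim(I_Z)\ge 1$ rather than by an empty application of Corollary~\ref{7}.
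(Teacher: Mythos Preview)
Your argument is correct and is precisely the approach the paper takes: the paper's proof consists of the single sentence ``This is a direct consequence of Corollary~\ref{7},'' and what you have written is simply a careful unpacking of that sentence, splitting $\dim(I_Z)=k$ into the two inequalities and matching each against the universal/existential forms of Corollary~\ref{7} at levels $k$ and $k-1$. There is no substantive difference in method.
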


\begin{proof}
This is a direct consequence of Corollary~\ref{7}.
\end{proof}

Therefore, if $\dim(I_Z)=k$ is a positive integer, then adding $k$ points to $Z$ always
results in a uniqueness set, but adding less than $k$ points never results in a uniqueness
set. We now show that the second assertion can be improved.

\begin{cor}
Suppose $Z$ is a zero sequence for $F^p_\alpha$, $\dim(I_Z)=k$ is a positive integer, 
and $j<k$. Then $Z\cup\{a_1,\cdots,a_j\}$ is always a zero sequence for $F^p_\alpha$.
\label{9}
\end{cor}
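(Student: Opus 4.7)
The strategy is to show first that any nonzero $f\in I_Z$ has at most $k-1$ \emph{extra} zeros beyond those required by $Z$, and then to use this fact to construct a function whose exact zero set is $Z'=Z\cup\{a_1,\dots,a_j\}$: pick any nonzero $f\in I_{Z'}$ and divide out its (necessarily finitely many) unwanted extra zeros.

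For the first step, suppose $f\in I_Z$ is not identically zero and its zero multiset is $Z\cup S$, where $S=(s_1,\dots,s_m)$ lists the excess zeros with multiplicities (including extra vanishing at points of $Z$ itself). For $0\le i\le m$ set
$$f_i(z)=\frac{f(z)}{\prod_{\ell=1}^i(z-s_\ell)}.$$
Each $f_i$ is entire (the $s_\ell$ are genuine zeros of $f$ with enough multiplicity to absorb the division), still vanishes on $Z$ (the divisions only consume excess multiplicity), and belongs to $F^p_\alpha$: outside a disk containing all $s_\ell$ the integrand is dominated by that of $f$, and on compact sets $f_i$ is bounded. The $f_0,\dots,f_m$ are linearly independent, since writing $f_i=q_i f_m$ with $q_i(z)=\prod_{\ell>i}(z-s_\ell)$ turns a relation $\sum c_if_i=0$ into a polynomial identity $\sum c_iq_i(z)\equiv 0$ whose terms have distinct degrees, forcing each $c_i=0$. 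Hence $m+1\le\dim I_Z=k$, so $m\le k-1$.

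With this bound in hand, rank-nullity applied to the evaluation map $I_Z\to\C^j$ (using Taylor coefficients of appropriate order where some $a_i$ lies in $Z$, exactly as in the proofs of Lemmas~\ref{5} and~\ref{6}) yields $\dim I_{Z'}\ge k-j\ge1$. Choose any nonzero $f\in I_{Z'}$; its zero multiset is $Z\cup S$ with $|S|\le k-1$, and by construction $S$ contains the multiset $\{a_1,\dots,a_j\}$. Setting $W=S\setminus\{a_1,\dots,a_j\}$ (multiset difference), we have $|W|\le k-1-j<\infty$, and
$$g(z)=\frac{f(z)}{\prod_{w\in W}(z-w)}$$
is entire, lies in $F^p_\alpha$ by the same integrability argument used above, and has zero set exactly $Z'$. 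This shows $Z'$ is a zero sequence for $F^p_\alpha$. The main subtlety is purely combinatorial multiplicity bookkeeping when some $a_i$ coincides with another $a_\ell$ or with a point of $Z$; everything goes through verbatim once all zero sets are interpreted as multisets and each division $f\mapsto f/(z-s)$ is read as reducing the multiplicity of $f$'s zero at $s$ by one.
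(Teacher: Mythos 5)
Your proof is correct and follows essentially the same route as the paper: show $Z'$ is not a uniqueness set, bound the number of extra zeros of a witnessing function by the dimension count, and divide those extra zeros out. The only difference is that you re-derive inline (via the linear-independence and rank–nullity arguments) the two facts that the paper simply cites from its Corollary~\ref{8}, which packages exactly those arguments from Lemmas~\ref{5} and~\ref{6}.
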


\begin{proof}
By Corollary~\ref{8}, $Z'=Z\cup\{a_1,\cdots,a_j\}$ is not a uniqueness set for $F^p_\alpha$. 
So there exists a function $f\in F^p_\alpha$, not identically zero, such that $f$ vanishes 
on $Z'$. By Corollary~\ref{8} again, the number of additional zeros of $f$ cannot 
exceed $k-j$. If these additional zeros $a$ are divided out of $f$ by the corresponding 
factors $z-a$, the resulting function is still in $F^p_\alpha$ and vanishes exactly on 
$Z'$. Thus $Z'$ is a zero sequence for $F^p_\alpha$.
\end{proof}

Consequently, if $Z$ is a zero sequence for $F^p_\alpha$ and $\dim(I_Z)=k$, then adding
less than $k$ points to $Z$ will always result in a zero sequence for $F^p_\alpha$ again,
but adding $k$ points to $Z$ will always result in a uniqueness set (which is certainly
not a zero sequence). Once again, this shows how unstable the zero sequences
of $F^p_\alpha$ are. The following result describes the structure of $I_Z$ when it is
finite dimensional.

\begin{thm}
Suppose $Z$ is a zero sequence for $F^p_\alpha$ and $\dim(I_Z)=k$ is a positive integer.
Then there exists a function $g\in I_Z$ such that $I_Z=gP_{k-1}$, where $P_{k-1}$ is
the set of all polynomials of degree less than or equal to $k-1$.
\label{10}
\end{thm}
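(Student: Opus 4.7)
The plan is to construct $g$ explicitly as the quotient of an extremal element of $I_Z$ by a product of linear factors, in the spirit of the examples produced in the proof of Theorem~\ref{4}. Fix any $k-1$ distinct points $a_1,\ldots,a_{k-1}\in\C$ that do not belong to $Z$. By Corollary~\ref{7}, for every $a\in\C$ the sequence $Z\cup\{a_1,\ldots,a_{k-1},a\}$ is a uniqueness set for $\fpa$, while by Corollary~\ref{9} the sequence $Z\cup\{a_1,\ldots,a_{k-1}\}$ remains a zero sequence. Evaluation at any point $a$ gives an injective linear map from $I_{Z\cup\{a_1,\ldots,a_{k-1}\}}$ into $\C$, so this space has dimension exactly one; let $F_0$ span it. Any further zero of $F_0$, simple or higher-order, would place $F_0$ in a uniqueness set and force $F_0\equiv0$, so $F_0$ vanishes exactly on $Z\cup\{a_1,\ldots,a_{k-1}\}$ with each $a_i$ a simple zero.

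I will then set
$$g(z)=\frac{F_0(z)}{(z-a_1)\cdots(z-a_{k-1})}.$$
This function is entire, vanishes exactly on $Z$, and lies in $\fpa$ by the ``division by linear factors'' calculation at the end of Section~2, which uses only the pointwise estimate (\ref{eq1}). So $g$ is the natural candidate generator of $I_Z$.

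The heart of the argument is the observation that for every $j=0,1,\ldots,k-1$ the function
$$g(z)\prod_{l=1}^{j}(z-a_l)=\frac{F_0(z)}{\prod_{l=j+1}^{k-1}(z-a_l)}$$
is again entire and, by the same division estimate, belongs to $\fpa$. The $k$ polynomials $P_j(z)=\prod_{l=1}^{j}(z-a_l)$ have distinct degrees $0,1,\ldots,k-1$, hence form a basis of $P_{k-1}$. Linearity then yields $g\cdot P_{k-1}\subset\fpa$, and since every element of $g\cdot P_{k-1}$ vanishes on $Z$ we obtain $g\cdot P_{k-1}\subset I_Z$. Multiplication by the non-zero entire function $g$ is injective, so $\dim(g\cdot P_{k-1})=k=\dim(I_Z)$, and the inclusion is forced to be an equality.

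The main obstacle lies in the choice of $g$: a generic element of $I_Z$ cannot be multiplied by non-constant polynomials while remaining in $\fpa$, because polynomials are not pointwise multipliers of $\fpa$. The correct $g$ must have spent exactly enough of its decay at infinity to admit $k-1$ extra linear factors but no more, and this is precisely what the extremal construction above achieves. Once the extremal $F_0$ is in hand, everything else reduces to a single invocation of the division-at-infinity estimate from Section~2.
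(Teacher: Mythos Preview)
Your argument is correct and takes a genuinely different route from the paper's.

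The paper fixes any $g\in I_Z$ vanishing exactly on $Z$, then for an arbitrary $f\in I_Z$ writes $f=gPe^h$ with $P$ a polynomial of degree at most $k-1$ and $h$ entire, and argues via order and type that $h$ must be constant; this gives the inclusion $I_Z\subset gP_{k-1}$, and the dimension count finishes. Your approach is the reverse inclusion plus dimension: you manufacture a specific $g$ by passing to the one-dimensional space $I_{Z\cup\{a_1,\ldots,a_{k-1}\}}$, peeling off the $k-1$ extra linear factors, and then observing that each partial product $g\prod_{l\le j}(z-a_l)$ is still a quotient of $F_0$ by linear factors, hence lies in $\fpa$. This is more elementary: it uses only Corollaries~\ref{7} and~\ref{9} and the fact (invoked already in Lemma~\ref{5}) that dividing an element of $\fpa$ by $(z-a)$ at a zero keeps it in $\fpa$, and it sidesteps the order/type step in the paper, which as written is somewhat brisk. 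The paper's route, on the other hand, shows directly that \emph{any} function in $I_Z$ vanishing exactly on $Z$ serves as the generator, without the extremal detour.

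Two minor points: your ``evaluation at any point $a$'' should read ``at any $a\notin Z\cup\{a_1,\ldots,a_{k-1}\}$'' for the map to be injective; and the division estimate you need (staying inside $\fpa$) is not quite the $F^q\to F^p$ calculation at the end of Section~2, but it is the simpler fact used without comment in the proof of Lemma~\ref{5}.
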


\begin{proof}
Let $f$ be a function in $I_Z$, not identically zero. Then its zero sequence 
must be of the form $Z\cup\{a_1,\cdots,a_j\}$, where $j\le k-1$. Otherwise, we can 
come up with a set $Z'=Z\cup\{a_1,\cdots,a_k\}$ such that $f$ vanishes on $Z'$, which
is a contradition to Lemma~\ref{5}.

It follows from the previous paragraph that every nonzero function $f\in I_Z$ must 
have order $2$ and type $\alpha/2$. Otherwise, multiplication of $f$ by arbitrary 
polynomials will still produce functions in $I_Z$, and we get functions in $I_Z$ that 
have more than $k$ additonal zeros.

Now fix a function $g\in I_Z$ that vanishes exactly on $Z$. For any $f\in I_Z$ we
have the factorization $f=gPe^h$, where $P$ is a polynomial with $\deg(P)\le k-1$ and
$h$ is entire. Since both $f$ and $g$ are of order $2$ and type $\alpha/2$, the
function $h$ must be constant. This shows that $I_Z\subset gP_{k-1}$. But
$$\dim(I_Z)=k=\dim(gP_{k-1}),$$
so we must actually have $I_Z=gP_{k-1}$.
\end{proof}

Obviously, the function $g$ in Theorem~\ref{10} is essentially unique. More specifically,
any two such functions can only differ by a constant multiple. This essentially unique
function is determined by requiring it to have {\em exactly} $Z$ as its zero sequence.


\begin{thebibliography}{99}

\bibitem{D} P. Duren, {\sl Theory of $H^p$ Spaces}, Academic Press, New York, 1970.
\bibitem{G} J. Garnett, {\sl Bounded Analaytic Functions}, Academic Press, New York, 1981.
\bibitem{JPR} S. Janson, J. Peetre, and R. Rochberg, Hankel forms and the Fock
space, {\sl Revista Mat. Ibero-Amer.} {\bf 3} (1987), 61-138.
\bibitem{HKZ} H. Hedenmalm, B. Korenblum, and K. Zhu, {\sl Theory of Bergman
Spaces}, Springer-Verlag, New York, 2000.
\bibitem{P1} A. M. Perelomov, On the completeness of a system of coherent states,
{\sl Theor. Math. Phys.} {\bf 6} (1971), 156-164.
\bibitem{P2} A. M. Perelomov, {\sl Generalized Coherent States and Their Applications},
Springer-Verlag, Berlin, 1986.
\bibitem{S} K. Seip, Density theorems for sampling and interpolation in
the Bargmann-Fock space I, {\sl J. Reine Angew. Math.} {\bf 429} (1992), 91-106.
\bibitem{SW} K. Seip and R. Wallst\'en, Density theorems for sampling and
interpolation in the Bargmann-Fock space II, {\sl J. Reine Angew. Math.} {\bf 429}
(1992), 107-113.
\bibitem{T} J.Y. Tung, Fock spaces, Ph.D. thesis, University of Michigan, 2005.
\bibitem{WW} E. T. Whittaker and G. N. Watson, {\sl A Course of Modern Analysis}, 
4th edition, Cambridge Univ. Press, reprinted in 1996.
\bibitem{Z1} K. Zhu, Zeros of functions in Fock spaces, {\sl Complex Variables}
{\bf 21} (1993), 87-98.
\bibitem{Z2} K. Zhu, {\sl Analysis on Fock Spaces}, Springer-Verlag, New York, 2012.

\end{thebibliography}
\end{document}